\newcommand{\ang}[1]{\langle#1\rangle}
\newcommand{\xvec}[1]{\ifcase 3{#1} {\ang {x_1,x_2,x_3} } \else 
\ifcase 4{#1} {\ang{x_1,x_2,x_3,x_4}} \else {\ang {x_1,\ldots,x_{#1}}}\fi\fi}
\newcommand{\yvec}[1]{\ifcase 3{#1} {\ang {y_1,y_2,y_3} } \else 
\ifcase 4{#1} {\ang{y_1,y_2,y_3,y_4}} \else {\ang {y_1,\ldots,y_{#1}}}\fi\fi}
\newcommand{\zvec}[1]{\ifcase 3{#1} {\ang {z_1,z_2,z_3} } \else 
\ifcase 4{#1} {\ang{z_1,z_2,z_3,z_4}} \else {\ang {z_1,\ldots,z_{#1}}}\fi\fi}
\newcommand{\vecc}[2]{\ifcase 3{#2} {\ang { {#1}_1,{#1}_2,{#1}_3 } } \else
\ifcase 4{#1} {\ang { {#1}_1,{#1}_2,{#1}_3,{#1}_{4} } }
\else {\ang { {#1}_1,\ldots,{#1}_{#2}}}\fi\fi}
\newcommand{\veccd}[3]{\ifcase 3{#2} {\ang { {#1}_{{#3}1},{#1}_{{#3}2},{#1}_{{#3}3} } } \else
\ifcase 4{#1} {\ang { {#1}_{{#3}1},{#1}_{{#3}2},{#1}_{#3}3},{#1}_{{#3}4} }
\else {\ang { {#1}_{{#3}1},\ldots,{#1}_{{#3}{#2}}}}\fi\fi}
\newcommand{\veccz}[2]{\ifcase 3{#2} {\ang { {#1}_0,{#1}_2,{#1}_3 } } \else
\ifcase 4{#1} {\ang { {#1}_0,{#1}_2,{#1}_3,{#1}_{4} } }
\else {\ang { {#1}_0,\ldots,{#1}_{#2}}}\fi\fi}
\newcommand{\xve}[1]{\ifcase 3{#1} {x_1,x_2,x_3} \else 
\ifcase 4{#1} {x_1,x_2,x_3,x_4} \else {x_1,\ldots,x_{#1}}\fi\fi}
\newcommand{\yve}[1]{\ifcase 3{#1} {y_1,y_2,y_3} \else 
\ifcase 4{#1} {y_1,y_2,y_3,y_4} \else {y_1,\ldots,y_{#1}}\fi\fi}
\newcommand{\zve}[1]{\ifcase 3{#1} {z_1,z_2,z_3} \else 
\ifcase 4{#1} {z_1,z_2,z_3,z_4} \else {z_1,\ldots,z_{#1}}\fi\fi}
\newcommand{\ve}[2]{\ifcase 3#2 {{#1}_1,{#1}_2,{#1}_3} \else
\ifcase 4#2 {{#1}_1,{#1}_2,{#1}_3,{#1}_{4}}
\else {{#1}_1,\ldots,{#1}_{#2}}\fi\fi}
\newcommand{\ved}[3]{\ifcase 3#2 {{#1}_{{#3}1},{#1}_{{#3}2},{#1}_{{#3}3}} \else
\ifcase 4#2 {{#1}_{{#3}1},{#1}_{{#3}2},{#1}_{{#3}3},{#1}_{{#3}4}}
\else {{#1}_{{#3}1},\ldots,{#1}_{{#3}{#2}}}\fi\fi}
\newcommand{\fuve}[3]{
\ifcase 3#2
{{#3}({#1}_1),{#3}({#1}_2,{#3}({#1}_3)} \else
\ifcase 4#2
{{#3}({#1}_1),{#3}({#1}_2),{#3}({#1}_3),{#3}({#1}_4)}
\else
{{#3}({#1}_1),\ldots,{#3}({#1}_{#2})}\fi\fi}
\newcommand{\setmathchar}[1]{\ifmmode#1\else$#1$\fi}
\newcommand{\vlist}[2]{%
	\setmathchar{%
		\compound#2\one{#2}\two
		\ifcompound
			({#1}_1,\ldots,{#1}_{#2})
		\else
			\ifcat N#2
				({#1}_1,\ldots,{#1}_{#2})
			\else
				\ifcase#2
					({#1}_0)\or
					({#1}_1)\or
					({#1}_1,{#1}_2)\or 
					({#1}_1,{#1}_2,{#1}_3)\or
					({#1}_1,{#1}_2,{#1}_3,{#1}_4)\else 
					({#1}_1,\ldots,{#1}_{#2})
				\fi
			\fi
		\fi}}
\newif\ifcompound
\def\compound#1\one#2\two{%
	\def\one{#1}
	\def\two{#2}
	\if\one\two
		\compoundfalse
	\else
		\compoundtrue
	\fi}
\newcommand{\xwe}[1]{\ifcase 3{#1} {x_1\wedge x_2\wedge x_3} \else 
\ifcase 4{#1} {x_1\wedge x_2\wedge x_3\wedge x_4} \else {x_1\wedge \cdots \wedge
x_{#1}}\fi\fi}
\newcommand{\we}[2]{\ifcase 3#2 {\ang { {#1}_1\wedge {#1}_2\wedge {#1}_3 } } \else
\ifcase 4{#1} {\ang { {#1}_1\wedge {#1}_2\wedge {#1}_3\wedge {#1}_{4} } }
\else {\ang { {#1}_1\wedge \cdots\wedge {#1}_{#2}}}\fi\fi}
\newcommand{\s}[1]{\s_{#1}}
\newcommand{\monus}{\;\raise.5ex\hbox{{${\buildrel
    \ldotp\over{\hbox to 6pt{\hrulefill}}}$}}\;}
\newcounter{savenumi}
\newtheorem{theoremfoo}{Theorem}[section] 
\newenvironment{theorem}{\pagebreak[1]\begin{theoremfoo}}{\end{theoremfoo}}
\newtheorem{lemmafoo}[theoremfoo]{Lemma}
\newenvironment{lemma}{\pagebreak[1]\begin{lemmafoo}}{\end{lemmafoo}}
\newtheorem{conjecturefoo}[theoremfoo]{Conjecture}
\newtheorem{conventionfoo}[theoremfoo]{Convention}
\newtheorem{porismfoo}[theoremfoo]{Porism}
\newtheorem{gamefoo}[theoremfoo]{Game}
\newtheorem{corollaryfoo}[theoremfoo]{Corollary}
\newtheorem{openfoo}[theoremfoo]{Open Problem}
\newtheorem{exercisefoo}{Exercise}
\newcommand{\fig}[1] 
{
 \begin{figure}
 \begin{center}
 \input{#1}
 \end{center}
 \end{figure}
}
\newtheorem{potanafoo}[theoremfoo]{Potential Analogue}
\newtheorem{notefoo}[theoremfoo]{Note}
\newenvironment{note}{\pagebreak[1]\begin{notefoo}\rm}{\end{notefoo}}
\newtheorem{notabenefoo}[theoremfoo]{Nota Bene}
\newtheorem{nttn}[theoremfoo]{Notation}
\newtheorem{empttn}[theoremfoo]{Empirical Note}
\newtheorem{examfoo}[theoremfoo]{Example}
\newtheorem{dfntn}[theoremfoo]{Definition}
\newtheorem{propositionfoo}[theoremfoo]{Proposition}
\newenvironment{proposition}{\pagebreak[1]\begin{propositionfoo}}{\end{propositionfoo}}
\newenvironment{proof}
    {\pagebreak[1]{\narrower\noindent {\bf Proof:\quad\nopagebreak}}}{\QED}
\newcommand{\yyskip}{\penalty-50\vskip 5pt plus 3pt minus 2pt}
\newcommand{\blackslug}{\hbox{\hskip 1pt
        \vrule width 4pt height 8pt depth 1.5pt\hskip 1pt}}
\newcommand{\QED}{{\penalty10000\parindent 0pt\penalty10000
        \hskip 8 pt\nolinebreak\blackslug\hfill\lower 8.5pt\null}
        \par\yyskip\pagebreak[1]}
\newcommand{\BBB}{{\penalty10000\parindent 0pt\penalty10000
        \hskip 8 pt\nolinebreak\hbox{\ }\hfill\lower 8.5pt\null}
        \par\yyskip\pagebreak[1]}
\newtheorem{factfoo}[theoremfoo]{Fact}
\newenvironment{block}{\begin{list}{\hbox{}}{\leftmargin 1em
    \itemindent -1em \topsep 0pt \itemsep 0pt \partopsep 0pt}}{\end{list}}
\begin{document}

\centerline{\bf $\sum_{p\le n} \frac{1}{p}=\ln(\ln(n)+O(1)$: An Exposition}

\centerline{\bf by William Gasarch and Larry Washington}

\section{Introduction}

It is well known that 
$\sum_{p\le n} \frac{1}{p}=\ln(\ln(n)) + O(1)$
where $p$ goes over the primes.
We give several known proofs of this. 

We first present a proof that
$\sum_{p\le n} \frac{1}{p} \ge \ln(\ln(n)) + O(1)$.
This is based on Euler's proof that $\sum_p \frac{1}{p}$ diverges.
We then present three proofs that 
$\sum_{p\le n}\frac{1}{p}\le \ln(\ln(n)) + O(1)$.
The first one, essentially due to Mertens,  
does not use the prime number theorem.
The second and third one do use the prime number theorem 
and hence are shorter.

For a complete treatment of Merten's proof that $\sum_p \frac{1}{p}$ diverges,
and how it compares with modern treatments, see the scholarly work of
Villarino~\cite{mertens}. 

\section{Euler's Proof that $\sum_{p\le n} \frac{1}{p} \ge \ln(\ln(n)) + O(1)$}

The proof here follows the one in \cite{kraftwash}.

\begin{lemma}\label{le:ln}
For $0\le x\le 1/2$, $-\ln(1-x) \le x+x^2$.
\end{lemma}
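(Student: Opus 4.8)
The plan is to reduce the inequality to the sign of a single auxiliary function and then read it off from an elementary derivative computation. Define $f(x) = x + x^2 + \ln(1-x)$ on the interval $[0,1/2]$. The asserted inequality $-\ln(1-x) \le x + x^2$ is equivalent to $f(x) \ge 0$ on that interval. Since $f(0) = 0$, it suffices to show that $f$ is nondecreasing on $[0,1/2]$, i.e.\ that $f'(x) \ge 0$ there.

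First I would compute $f'(x) = 1 + 2x - \frac{1}{1-x}$ and put it over the common denominator $1-x$, which (for $x \le 1/2$) is positive. This gives $f'(x) = \frac{(1+2x)(1-x) - 1}{1-x} = \frac{x - 2x^2}{1-x} = \frac{x(1-2x)}{1-x}$. On $[0,1/2]$ both factors in the numerator are nonnegative and the denominator is positive, so $f'(x) \ge 0$, whence $f(x) \ge f(0) = 0$ for all $x \in [0,1/2]$. Rearranging yields $-\ln(1-x) \le x + x^2$, as claimed. (An alternative route avoids calculus entirely: expand $-\ln(1-x) = \sum_{k \ge 1} x^k/k = x + \sum_{k \ge 2} x^k/k \le x + \tfrac12\sum_{k\ge 2} x^k = x + \frac{x^2}{2(1-x)}$, and then use $1-x \ge 1/2$ to bound the last term by $x^2$.)

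There is essentially no hard part here; the only thing to be careful about is where the hypothesis $x \le 1/2$ is used. In the derivative approach it is exactly what forces the factor $1-2x$ to be nonnegative (the inequality fails badly as $x \to 1^-$, since $-\ln(1-x) \to \infty$); in the series approach it is what makes $\frac{1}{1-x} \le 2$. I would also note in passing that the statement is needed later only as an upper bound on $-\ln(1-1/p)$ for primes $p \ge 2$, so the range $[0,1/2]$ is exactly what is required.
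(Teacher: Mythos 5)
Your proof is correct and is essentially the paper's argument in different clothing: the paper writes $-\ln(1-x)=\int_0^x \frac{dt}{1-t}$ and integrates the pointwise bound $\frac{1}{1-t}\le 1+2t$ on $[0,1/2]$, while you verify that same pointwise bound as the statement $f'(x)\ge 0$ and conclude by monotonicity from $f(0)=0$. Your parenthetical power-series argument is a genuinely calculus-free alternative, but the main route you take matches the paper's.
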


\begin{proof}
$-\ln(1-x) = \int_0^x \frac{1}{1-t} dt.$
For $0\le t \le 1/2$, $\frac{1}{1-t}\le 1+2t$. 
Hence
$$
-\ln(1-x)=\int_0^x\frac{1}{1-t} dt\le \int_0^x(1+2t)dt=x+x^2.
$$
\end{proof}

\begin{theorem}
$\sum_{p\le n} \frac{1}{p} \ge \ln(\ln(n)) + O(1)$.
\end{theorem}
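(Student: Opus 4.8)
The plan is to follow Euler's classical argument, converting the sum over primes into the logarithm of a product and then bounding that product below by the harmonic sum. First I would consider the finite product $\prod_{p\le n}\left(1-\frac1p\right)^{-1}$. Expanding each factor as a geometric series $\left(1-\frac1p\right)^{-1}=\sum_{k\ge 0}p^{-k}$ and multiplying the finitely many series together, the fundamental theorem of arithmetic shows that the resulting sum contains the term $1/m$ for every positive integer $m$ all of whose prime factors are at most $n$; in particular it contains $1/m$ for every $m\le n$. Since every term produced is positive, this gives
$$
\prod_{p\le n}\left(1-\frac1p\right)^{-1}\ \ge\ \sum_{m=1}^{n}\frac1m\ \ge\ \ln(n),
$$
where the last inequality is the usual integral comparison $\sum_{m=1}^n\frac1m\ge\int_1^{n+1}\frac{dx}{x}=\ln(n+1)>\ln(n)$.

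Next I would take logarithms of both sides, obtaining $\sum_{p\le n}\bigl(-\ln(1-\tfrac1p)\bigr)\ge\ln(\ln(n))$. Now I invoke Lemma~\ref{le:ln} with $x=1/p$, which is legitimate since $p\ge 2$ forces $0\le 1/p\le 1/2$; it yields $-\ln(1-\tfrac1p)\le \tfrac1p+\tfrac1{p^2}$. Combining,
$$
\ln(\ln(n))\ \le\ \sum_{p\le n}\left(\frac1p+\frac1{p^2}\right)\ =\ \sum_{p\le n}\frac1p\ +\ \sum_{p\le n}\frac1{p^2},
$$
and since $\sum_{p\le n}\frac1{p^2}\le\sum_{m=1}^{\infty}\frac1{m^2}=\frac{\pi^2}{6}$ (or, avoiding the Basel sum, one can bound it by $\sum_{m\ge 2}\frac1{m(m-1)}=1$), we conclude $\sum_{p\le n}\frac1p\ge\ln(\ln(n))-\frac{\pi^2}{6}$, which is exactly the claimed bound with an explicit $O(1)$ constant.

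The only genuinely delicate point is the first displayed inequality: justifying that expanding the finite product really does dominate $\sum_{m\le n}1/m$ term by term. Because the product is finite, there is no issue of rearrangement or convergence — each factor is an absolutely convergent geometric series, finitely many of these can be multiplied out freely, and unique factorization guarantees that each relevant $m$ appears (exactly once, in fact, though we only need ``at least once''). I would state this carefully but not belabor it. Everything after that step — the integral bound for the harmonic sum, taking logarithms, applying Lemma~\ref{le:ln}, and bounding $\sum 1/p^2$ — is routine.
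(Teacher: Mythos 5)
Your proof is correct and follows essentially the same route as the paper's: Euler's product expansion over primes $\le n$ dominating the harmonic sum, the integral bound $\sum_{m\le n}1/m\ge\ln n$, taking logarithms, applying Lemma~\ref{le:ln} with $x=1/p$, and absorbing $\sum_p 1/p^2$ into the $O(1)$. Your added care about why the finite product expansion dominates the harmonic sum (via unique factorization, with no convergence issues) is a nice touch but does not change the argument.
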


\begin{proof}
Clearly 
$$\sum_{j=1}^\infty \frac{1}{j} = 
(1-\frac{1}{2} + \frac{1}{2^2}+\cdots)
(1-\frac{1}{3} + \frac{1}{3^2}+\cdots)\cdots =
\frac{1}{1-2^{-1}}\times
\frac{1}{1-3^{-1}}\times \cdots
$$
which we rewrite as
$$\sum_{j=1}^\infty \frac{1}{j} = \prod_{p} (1-p^{-1})^{-1}$$
We need a finite version of this statement. 
Let $S_n$ be the set of natural numbers whose prime factors $p$ are all $\le n$.
Then
$$
\sum_{j\in S_n} \frac{1}{j} = \prod_{p\le n} (1-p^{-1})^{-1}.
$$
Clearly $\sum_{j\le n} \frac{1}{j}  \le \sum_{j\in S_n} \frac{1}{j}$.
By integration $\ln n \le \sum_{j\le n} \frac{1}{j}$. Hence we have
$$\ln(n) \le \sum_{j\le n} \frac{1}{j} \le \sum_{j\in S_n} \frac{1}{j} = \prod_{p\le n} (1-p^{-1})^{-1}$$
$$\ln(\ln(n)) \le \sum_{p\le n} -\ln(1-p^{-1}).$$
By Lemma~\ref{le:ln} 
$$\sum_{p\le n} -\ln(1-p^{-1}) \le 
  \sum_{p\le n} \frac{1}{p} + \frac{1}{p^2}.$$
Putting this all together we get
$$
\sum_{p\le n}\frac{1}{p}\ge \ln(\ln(n))-\sum_{p\le n}\frac{1}{p^2}
$$
Since the second sum is bounded by $\sum_{i=1}^\infty \frac{1}{i^2}$,
which converges, we have
$$
\sum_{p\le n}\frac{1}{p}\ge \ln(\ln(n))-O(1).
$$
\end{proof}

\begin{note}
If the above proof is done more carefully with attention paid to the constants
you can obtain 
$\sum_{p\le n}\frac{1}{p}\ge \ln(\ln(n))-0.48$. See \cite{kraftwash}.
\end{note}

\section{Mertens Proof that Does Not Use the Prime Number Theorem}

This is adapted from Landau's book \cite{landau}. He works a little harder and gets $o(1)$ instead of $O(1)$.

We first need a weak form of the prime number theorem.

\begin{lemma}
$\pi(x) = O(x/\ln x)$.
\end{lemma}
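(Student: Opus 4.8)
The plan is to use the classical Chebyshev-style argument built on the middle binomial coefficient $\binom{2n}{n}$. First I would record the crude bound $\binom{2n}{n}\le 4^n$, which holds because $\binom{2n}{n}$ is one of the $2n+1$ terms in the binomial expansion of $(1+1)^{2n}=2^{2n}$. Next I would observe that every prime $p$ with $n<p\le 2n$ divides $\binom{2n}{n}=\frac{(2n)!}{n!\,n!}$: such a $p$ divides $(2n)!$ exactly once and divides neither copy of $n!$ in the denominator, so $p\mid\binom{2n}{n}$. Since distinct primes contribute coprime factors, $\prod_{n<p\le 2n}p$ divides $\binom{2n}{n}$, and hence $\prod_{n<p\le 2n}p\le 4^n$.

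From here, because each prime counted exceeds $n$, we get $n^{\pi(2n)-\pi(n)}<\prod_{n<p\le 2n}p\le 4^n$, so taking logarithms $(\pi(2n)-\pi(n))\ln n<(\ln 4)\,n$, i.e. $\pi(2n)-\pi(n)<\frac{(\ln 4)\,n}{\ln n}$ for $n\ge 2$. The final step is a dyadic summation: for $x$ with $2^k\le x<2^{k+1}$, telescope $\pi(2^{k+1})=\pi(1)+\sum_{j=1}^{k}\bigl(\pi(2^{j+1})-\pi(2^{j})\bigr)$, bound each difference by the inequality just derived, and sum. Since the resulting terms grow fast enough that the last ones dominate, one concludes $\pi(2^{k+1})=O(2^{k}/k)$, and then $\pi(x)\le\pi(2^{k+1})=O(x/\ln x)$ since $2^{k}\le x$.

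I expect the only real bookkeeping obstacle to be making the dyadic sum clean, since $n/\ln n$ is not monotone and $\ln n$ is tiny for small $n$. Two fixes are available: either absorb the finitely many blocks with index below a fixed constant into the $O(\cdot)$ term and estimate $\sum_{j\le k}\frac{2^{j}}{j}$ by comparison with a geometric series (the ratio of consecutive terms stays bounded below by a constant greater than $1$, so the sum is at most a constant times its last term $\frac{2^k}{k}=\Theta(x/\ln x)$); or, more cleanly, phrase everything in terms of the Chebyshev function $\theta(x)=\sum_{p\le x}\ln p$. In the latter route, Step two immediately gives $\theta(2n)-\theta(n)\le(\ln 4)\,n$, the dyadic sum now has genuinely geometric terms and yields $\theta(x)=O(x)$, and one recovers the lemma from $\theta(x)\ge\sum_{\sqrt{x}<p\le x}\ln p>\tfrac12(\ln x)\bigl(\pi(x)-\pi(\sqrt{x})\bigr)$, so $\pi(x)\le\sqrt{x}+\frac{2\theta(x)}{\ln x}=O(x/\ln x)$.

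I would lean toward the $\theta$-function version in the write-up, since it avoids the awkward non-geometric sum entirely and isolates the one genuinely arithmetic input (the divisibility of $\binom{2n}{n}$ by primes in $(n,2n]$) from the purely analytic summation.
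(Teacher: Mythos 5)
Your proof is correct and rests on the same key idea as the paper's: primes in $(n,2n]$ divide $\binom{2n}{n}\le 4^n$, giving $\pi(2n)-\pi(n)=O(n/\ln n)$, followed by a dyadic summation. The only difference is cosmetic bookkeeping in that last summation --- the paper weakens the constant so the bound telescopes exactly as $4\bigl(\frac{y}{\ln y}-\frac{y/2}{\ln(y/2)}\bigr)$, while you either compare $\sum_j 2^j/j$ with a geometric series or detour through $\theta(x)=O(x)$; all three devices are sound and handle the non-monotonicity of $n/\ln n$ that you correctly flag.
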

\begin{proof} Let $n$ be a positive integer. Clearly every prime $p$ with $n<p\le 2n$ occurs in the prime factorization of
the binomial coefficient $\binom{2n}{n}$. Therefore,
$$
n^{\pi(2n)-\pi(n)} = \prod_{n<p\le 2n} n \le \prod_{n<p\le 2n} p \le \binom{2n}{n} \le (1+1)^{2n} = 4^n.
$$
Taking logs yields
$$
\pi(2n)-\pi(n) \le \frac{n\ln 4}{\ln n} \le 2\ln 4 \left(\frac{2n}{\ln(2n)} - \frac{n}{\ln n}\right)
$$
for $n\ge 8$. If $y\ge 16$ is a real number, let $2n$ be the largest even integer with $2n\le y$. 
Then $\pi(y)-\pi(2n)\le 1$ and $|\pi(y/2)-\pi(n)|\le 1$. By increasing $2\ln 4 $ to $4$ we can absorb these errors and obtain
$$
\pi(y)-\pi(y/2)\le 4\left(\frac{y}{\ln y} - \frac{y/2}{\ln(y/2)}\right)
$$
for $y\ge 16$. Adding up this inequality for $y=x, x/2, x/4, \dots$ yields
$$
\pi(x)-\pi(16)\le 4\left(\frac{x}{\ln{x}}\right).
$$
This yields the lemma.
\end{proof}

We now need a result that is interesting in its own right.
\begin{proposition}
$$
\sum_{p\le x} \frac{\ln p}p = \ln x + O(1).
$$
\end{proposition}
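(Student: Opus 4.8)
The plan is to read off the estimate from the prime factorization of $n!$, the classical route of Mertens. First I would pin down $\ln(n!)$ up to $O(\ln n)$: comparing the monotone sum $\sum_{j=1}^n \ln j$ with $\int_1^n \ln t\,dt = n\ln n - n + 1$ gives $\ln(n!) = n\ln n - n + O(\ln n)$ for integer $n$ (one does not need the full Stirling formula, only that a monotone sum and its integral differ by at most the largest term). Next I would invoke Legendre's formula: the exact exponent of a prime $p$ in $n!$ is $\sum_{k\ge 1}\lfloor n/p^k\rfloor$, so
$$
\ln(n!) = \sum_{p\le n}\Big(\sum_{k\ge 1}\lfloor n/p^k\rfloor\Big)\ln p .
$$

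I would then peel off the $k=1$ term. The higher-order tail is harmless: $\sum_{p\le n}\sum_{k\ge 2}\lfloor n/p^k\rfloor\ln p \le n\sum_{p}\ln p\sum_{k\ge 2}p^{-k} = n\sum_{p}\frac{\ln p}{p(p-1)}$, and the last series converges (its terms are $O(\ln p/p^2)$), so this block contributes only $O(n)$. For the $k=1$ term, writing $\lfloor n/p\rfloor = n/p + O(1)$ gives
$$
\sum_{p\le n}\lfloor n/p\rfloor\ln p = n\sum_{p\le n}\frac{\ln p}{p} + O\Big(\sum_{p\le n}\ln p\Big).
$$
The only external input I need here is the Chebyshev-type bound $\sum_{p\le n}\ln p = O(n)$, and this follows from the weak prime number theorem already established, since $\sum_{p\le n}\ln p \le \pi(n)\ln n = O(n)$; alternatively it drops straight out of the binomial-coefficient estimate used to prove that lemma.

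Assembling the pieces, $n\ln n - n + O(\ln n) = n\sum_{p\le n}\frac{\ln p}{p} + O(n)$, and dividing through by $n$ yields $\sum_{p\le n}\frac{\ln p}{p} = \ln n + O(1)$ for every integer $n$. To pass to real $x$, take $n=\lfloor x\rfloor$: the sum changes by at most $(\ln n)/n = O(1)$ and $\ln n$ differs from $\ln x$ by $O(1)$, so the statement holds for all real $x$. I expect nothing conceptually hard here; the one place to be careful is the bookkeeping of error terms — confirming that both the $k\ge 2$ tail and the floor-rounding each cost only $O(n)$ \emph{before} the division by $n$, since an error of the wrong order there would swamp the main term.
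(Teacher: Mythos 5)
Your argument is essentially identical to the paper's: both start from Legendre's formula for the exponent of $p$ in $n!$, bound the $k\ge 2$ tail by the convergent series $\sum \ln p/(p^2-p)$, control the floor-rounding error by $\pi(n)\ln n = O(n)$ via the weak Chebyshev bound, compare with the elementary Stirling estimate $\ln(n!)=n\ln n+O(n)$, and divide by $n$. The bookkeeping is correct and complete, including the passage from integer $n$ to real $x$.
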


\begin{proof}
If $n$ is a positive integer and $p$ is a prime, the power of $p$ dividing $n!$ is
$$
\left\lfloor\frac{n}{p}\right\rfloor + \left\lfloor\frac{n}{p^2}\right\rfloor + \left\lfloor\frac{n}{p^3}\right\rfloor + \cdots.
$$
Therefore,
$$
\ln(n!)=\sum_{p\le n} \ln p \left(\left\lfloor\frac{n}{p}\right\rfloor + \left\lfloor\frac{n}{p^2}\right\rfloor + \left\lfloor\frac{n}{p^3}\right\rfloor + \cdots\right).
$$
Changing $\left\lfloor n/p\right\rfloor$ to $n/p$ introduces an error of most 1, so we have
$$
\sum_{p\le n} \ln p \left\lfloor\frac{n}{p}\right\rfloor = n\sum_{p\le n} \frac{\ln p}{p} + O(\sum_{p\le n} \ln p).
$$
Since there are $\pi(n)$ terms in the sum, Lemma 1 implies  that 
$$O(\sum_{p\le n} \ln p)= O(\pi(n)\ln n) = O(n).$$

Let's treat the higher terms:
$$
\left\lfloor\frac{n}{p^2}\right\rfloor + 
\left\lfloor\frac{n}{p^3}\right\rfloor + 
\cdots < 
\frac{n}{p^2}\left(1+p^{-1}+p^{-2}+\cdots\right) = \frac{n}{p^2-p}.
$$
Therefore,
$$
\sum_{p\le n} \ln p 
\left(\left\lfloor\frac{n}{p^2}\right\rfloor + 
\left\lfloor\frac{n}{p^3}\right\rfloor + 
\cdots\right)\le n\sum_{p\le n} \frac{\ln p}{p^2-p}=O(n)
$$
since $\sum \ln p/(p^2 -p) \le \sum_{j\ge 2} \ln j/(j^2-j)$, which converges.

Stirling's formula says that
$$
\ln(n!) = n\ln n+O(n)
$$
(this weak form can be proved by comparing $\sum \ln j$ with $\int \ln t\, dt$).
Putting everything together yields
$$
n\ln n + O(n) = n\sum_{p\le n} \frac{\ln p}{p} + O(n).
$$
Dividing by $n$ yields the proposition for $x=n$. 
The error introduced by changing from $x$ to $n=\left\lfloor{x}\right\rfloor$ is absorbed
by $O(x)$, so the proposition is proved. 
\end{proof}

The following lemma is well known. It is an analog of integration by
parts for summations. It is easily proven by induction on $n$.

\begin{lemma}\label{le:sumparts}
Let both $f_1,f_2,\ldots$ and $g_1,g_2,\ldots$ be sequences 
of complex numbers.
Then, for all $m\le n$,
$$
\sum_{i=m}^n f_i(g_{i+1}-g_i)=
f_{n+1}g_{n+1}-f_mg_m-\sum_{i=m}^n g_{i+1}(f_{i+1}-f_i).
$$
\end{lemma}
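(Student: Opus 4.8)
The plan is to prove the identity by induction on $n$, holding $m$ fixed. For the base case $n=m$, the left side is $f_m(g_{m+1}-g_m)$ and the right side is $f_{m+1}g_{m+1}-f_mg_m-g_{m+1}(f_{m+1}-f_m)$; expanding the right side and cancelling the two $f_{m+1}g_{m+1}$ terms leaves $g_{m+1}f_m-f_mg_m=f_m(g_{m+1}-g_m)$, so the two sides agree.

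For the inductive step, I would assume the identity for some $n\ge m$ and consider the sum running up to $n+1$. Split off the last term, $\sum_{i=m}^{n+1} f_i(g_{i+1}-g_i)=\sum_{i=m}^{n} f_i(g_{i+1}-g_i)+f_{n+1}(g_{n+2}-g_{n+1})$, apply the induction hypothesis to the first piece, and note that the $+f_{n+1}g_{n+1}$ produced by the hypothesis cancels the $-f_{n+1}g_{n+1}$ coming from expanding $f_{n+1}(g_{n+2}-g_{n+1})$. What is left is $f_{n+1}g_{n+2}-f_mg_m-\sum_{i=m}^{n}g_{i+1}(f_{i+1}-f_i)$. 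It then remains to check that this coincides with the asserted right side for $n+1$, namely $f_{n+2}g_{n+2}-f_mg_m-\sum_{i=m}^{n+1}g_{i+1}(f_{i+1}-f_i)$: peel the $i=n+1$ term off that last sum, cancel the two $g_{n+2}f_{n+2}$ terms, and one is left with $g_{n+2}f_{n+1}-f_mg_m-\sum_{i=m}^n g_{i+1}(f_{i+1}-f_i)$ on both sides, completing the induction.

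There is no genuine obstacle here; every step is a finite algebraic cancellation. The one place to stay careful is the index bookkeeping: the two sums in the statement share the range $i=m,\dots,n$, while the boundary terms involve $f_{n+1},g_{n+1}$, and when the range is extended to $n+1$ the new boundary terms $f_{n+2},g_{n+2}$ appear — matching these up correctly is essentially the entire content of the lemma. (One could instead give a non-inductive argument by expanding $\sum_{i=m}^n g_{i+1}(f_{i+1}-f_i)$ and telescoping directly, but the induction is the cleanest route.)
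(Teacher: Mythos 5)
Your induction on $n$ is correct, and it is exactly the route the paper intends: the paper states only that the lemma ``is easily proven by induction on $n$'' and omits the details, which you have filled in accurately (both the base case $n=m$ and the cancellation of $f_{n+1}g_{n+1}$ in the inductive step check out).
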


We can now prove the theorem. 

\begin{theorem} 
$\sum_{p\le x} \frac 1p = \ln\ln x + O(1).$
\end{theorem}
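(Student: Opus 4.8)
The plan is to deduce the theorem from the Proposition by partial summation, using Lemma~\ref{le:sumparts} to pass from the weighted sum $\sum_{p\le n}\frac{\ln p}{p}$ to the unweighted sum $\sum_{p\le n}\frac1p$. First I would reduce to the case $x=n$ an integer: replacing $x$ by $\lfloor x\rfloor$ removes no prime from the range and changes $\ln\ln x$ by $o(1)$, so the general statement follows from the integer case.

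For the integer case, set $b_i=\frac{\ln i}{i}$ when $i$ is prime and $b_i=0$ otherwise, and put $g_i=\sum_{2\le j\le i-1}b_j=\sum_{p\le i-1}\frac{\ln p}{p}$, so that $g_{i+1}-g_i=b_i$ and, by the Proposition, $g_{i+1}=\ln i+O(1)$ (uniformly in $i\ge 2$). Take $f_i=\frac1{\ln i}$ for $i\ge 2$. Then $\sum_{i=2}^{n}f_i(g_{i+1}-g_i)=\sum_{i=2}^{n}\frac{b_i}{\ln i}=\sum_{p\le n}\frac1p$, so Lemma~\ref{le:sumparts} with $m=2$ gives
$$\sum_{p\le n}\frac1p = f_{n+1}g_{n+1} - f_2 g_2 - \sum_{i=2}^{n} g_{i+1}(f_{i+1}-f_i).$$
Here $g_2=0$ so $f_2g_2=0$, and $f_{n+1}g_{n+1}=\frac{\ln n+O(1)}{\ln(n+1)}=O(1)$, so everything reduces to the last sum.

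In that sum I would substitute $g_{i+1}=\ln i+O(1)$ and note $\frac1{\ln i}-\frac1{\ln(i+1)}>0$, obtaining
$$-\sum_{i=2}^{n} g_{i+1}(f_{i+1}-f_i) = \sum_{i=2}^{n}\ln i\left(\tfrac1{\ln i}-\tfrac1{\ln(i+1)}\right) + \sum_{i=2}^{n}O(1)\left(\tfrac1{\ln i}-\tfrac1{\ln(i+1)}\right).$$
The second sum telescopes and is bounded by a constant times $\frac1{\ln 2}$, hence $O(1)$. The first sum equals $\sum_{i=2}^{n}\frac{\ln(1+1/i)}{\ln(i+1)}=\sum_{i=2}^{n}\frac1{i\ln(i+1)}+O(1)$, using $\ln(1+1/i)=\frac1i+O(1/i^2)$ and convergence of $\sum 1/i^2$. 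Replacing $\ln(i+1)$ by $\ln i$ costs another $O(1)$ (the difference is $O(1/i^2)$ termwise), and comparing $\sum_{3\le i\le n}\frac1{i\ln i}$ with $\int_2^{n}\frac{dt}{t\ln t}=\ln\ln n+O(1)$ finishes the estimate. Collecting the pieces yields $\sum_{p\le n}\frac1p=\ln\ln n+O(1)$.

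The routine parts are the $\ln(i+1)$-versus-$\ln i$ bookkeeping and the sum-versus-integral comparison. The one spot where genuine care is needed is the telescoping step: I must use that the error in the Proposition is a single uniform bound $|g_{i+1}-\ln i|\le C$, so that $\sum_i (g_{i+1}-\ln i)\bigl(\frac1{\ln i}-\frac1{\ln(i+1)}\bigr)$ really is $O(1)$ and not something that could accumulate. That, together with choosing $f_i$ and $g_i$ so the left side of Lemma~\ref{le:sumparts} is exactly $\sum_{p\le n}\frac1p$, is the main (and fairly mild) obstacle.
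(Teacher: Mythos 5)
Your proof is correct and takes essentially the same route as the paper's: partial summation (Lemma~\ref{le:sumparts}) applied to the Proposition $\sum_{p\le x}\frac{\ln p}{p}=\ln x+O(1)$, with the main term reducing to $\sum_i \frac{1}{i\ln i}=\ln\ln n+O(1)$ and the $O(1)$ error controlled by the positive telescoping differences $\frac{1}{\ln i}-\frac{1}{\ln(i+1)}$ exactly as you flag. The only difference is organizational: you apply summation by parts once to the whole sum and then substitute the asymptotic for $g_{i+1}$, whereas the paper first splits $f(n)=\ln n+r(n)$ and applies summation by parts only to the remainder term.
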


\begin{proof}
We have
$$
f(x)=\sum_{p\le x} \frac{\ln p}{p} = \ln x + r(x),
$$
where $r(x)= O(1)$.
Then
\begin{align*}
\sum_{p\le x} \frac 1p &= \sum_{p\le x} \frac{\ln p}{p}\frac{1}{\ln p} = \sum_{n=2}^{x} \frac{f(n)-f(n-1)}{\ln n}\\
&= \sum_{n=2}^x \frac{\ln n - \ln(n-1)}{\ln n} + \sum_{n=2}^x \frac{r(n)-r(n-1)}{\log n}.
\end{align*}
Since 
$$
\ln n - \ln(n-1) = -\ln\left(1-\frac 1n\right) = \frac 1n + O(1/n^2),
$$
and
$$
\sum_{n=2}^x \frac{1}{n\ln n} = \ln \ln x + O(1),
$$
we find that
$$
\sum_{n=2}^x \frac{\ln n - \ln(n-1)}{\ln n} = \ln\ln x + O(1).
$$
Summation by parts yields
\begin{align*}
\sum_{n=2}^x \frac{r(n)-r(n-1)}{\log n}&= \sum_{n=2}^x r(n)\left(\frac{1}{\ln n} - \frac{1}{\ln(n+1)}\right) + \frac{r(\left\lfloor x\right\rfloor)}{\ln(\left\lfloor x\right\rfloor +1)}\\
&=O\left(\sum_{n=2}^x \frac{1/n}{(\ln n)^2}\right) + O(1) = O(1).
\end{align*}
Putting everything together yields
the theorem. 
\end{proof}

\section{A Proof that uses Summation by Parts}

In this section we give the standard way to estimate $\sum 1/p$ using the Prime Number Theorem.

%
%
%
%

\begin{theorem}
$\sum_{p\le n} \frac{1}{p} = \ln(\ln(n)) + O(1)$.
\end{theorem}

\begin{proof}
Let $\pi(i)$ be the number of primes $\le i$.
Let $g(i)=\pi(i-1)$ and $f(i)=\frac{1}{i}$.
Let $m=2$.
Plugging these into Lemma~\ref{le:sumparts} yields
$$
\sum_{i=2}^n \frac{1}{i}(\pi(i) -\pi(i-1))=
\frac{1}{n+1}\pi(n) - \frac{1}{2}\pi(1) - 
\sum_{i=2}^n \pi(i)(\frac{1}{i+1}-\frac{1}{i}).
$$
We need:
\begin{itemize}
\item
$\pi(i)-\pi(i-1)$ is 1 if $i$ is prime but 0 otherwise.
\item
$\pi(n) = \frac{n}{\ln n}+ O(\frac{n}{\ln^2 n})$ by the Prime Number Theorem (when it is proved with an error term).
\end{itemize}
We have
$$
\pi(i)(\frac{1}{i+1}-\frac{1}{i})= \frac{\pi(i)}{i(i+1)}=\frac{1}{(i+1)\ln i} + O\left(\frac{1}{(i+1)\ln^2 i}\right)
$$
by the Prime Number Theorem.
But this equals
$$
\frac{1}{i\ln i} -\frac{1}{i(i+1)\ln i}+O\left(\frac{1}{(i+1)\ln^2 i}\right)= \frac{1}{i\ln i}+ O\left(\frac{1}{(i+1)\ln^2 i}\right).
$$
Therefore,
$$
\sum_{p\le n} \frac{1}{p} = \sum_{i=2}^n \frac{1}{i\ln i}+ O\left(\frac{1}{(i+1)\ln^2 i}\right) = \ln(\ln(n)) + O(1),
$$
where we have used
$$
\sum_{i=2}^n \frac{1}{i\ln i}=\int_2^n\frac{1}{x\ln x} dx +O(1) = \ln(\ln(x)) + O(1)
$$
and
$$
\sum_{i=2}^n \frac{1}{(i+1)\ln^2 i} = O(1)
$$
by the Integral Test.
\end{proof}

\section{A Proof that uses Integration by Parts}

This is the same as the previous proof, with the summation by parts replaced by integration
by parts in a Stieltjes integral.

\begin{theorem}
$\sum_{p\le n} \frac{1}{p} = \ln(\ln(n)) + O(1)$.
\end{theorem}

\begin{proof}
The preceding proof can be rewritten using Stieltjes integrals:
$$\sum_{p\le x} \frac{1}{p} = \int_{1.9}^x \frac{1}{t} d\pi(t).$$
Integration by parts yields
$$\frac{\pi(x)}{x} + \int_{1.9}^x \frac{\pi(t)}{t^2} dt.$$
We use the Prime Number Theorem approximation $\pi(x) = \frac{x}{\ln x} + O(\frac{x}{\ln^2 x})$ to obtain
$$
\frac{1}{\ln x}+\int_{1.9}^x\frac{1}{t\ln t}+O(\int_{1.9}^x\frac{1}{t\ln^2 t})=\ln(\ln(x))+O(1).
$$
\end{proof}

\section{What Else is Known}

Rosser and Schoenfeld~\cite{primesrecip} have shown that, when $n\ge 286$,
$$
\ln(\ln n)-\frac{1}{2(\ln n)^2}+B\le\sum_{p\le n}\frac{1}{p}\le \ln(\ln n) + \frac{1}{(2\ln n)^2} + B,
$$
\noindent
where $B=0.261497212847643$.

Even though the sum $\sum_{p\le n} \frac{1}{p}$ diverges, it
grows very slowly:
\begin{itemize}
\item
$\sum_{p\le 10} \frac{1}{p} = 1.176$
\item
$\sum_{p\le 10^6} \frac{1}{p} = 2.887$
\item
$\sum_{p\le 10^9} \frac{1}{p} = 3.293$
\item
$\sum_{p\le 10^{100}} \frac{1}{p} \sim 5.7$
\end{itemize}


\end{document}